\newtheorem{thm}{Theorem}[section]
\newtheorem{prop}[thm]{Proposition}
\theoremstyle{definition}
\newtheorem{dfn}[thm]{Definition}
\newtheorem{exple}[thm]{Example}
\newtheorem{remark}[thm]{Remark}
\theoremstyle{plain}
\newtheorem{cor}[thm]{Corollary}
\numberwithin{equation}{section}
\numberwithin{equation}{section}
\newcommand{\N}{\mathbb{N}}
\newcommand{\Q}{\mathbb{Q}}
\newcommand{\Z}{\mathbb{Z}}
\newcommand{\F}{\mathbb{F}}
\newcommand{\m}{\mathfrak{m}}
\newcommand{\n}{\mathfrak{n}}
\newcommand{\p}{\mathfrak{p}}
\newcommand{\mfn}{\mathfrak{n}}
\newcommand{\SL}{\mathrm{SL}}
\newcommand{\GL}{\mathrm{GL}}
\newcommand{\mrm}[1]{\mathrm{#1}}
\def\1{1\!\!1}
\newcommand{\psmat}[4]{\bigl( \begin{smallmatrix} #1 & #2 \\ #3 & #4 \end{smallmatrix} \bigr)}
\title[On Congruences and Linear relations]{On Congruences and Linear relations 
for Drinfeld modular forms of level $\Gamma_0(T)$, arbitrary type}
\author[T. Dalal]{Tarun Dalal}
\email{ma17resch11005@iith.ac.in}
\address{
Department of Mathematics \\
Indian Institute of Technology Hyderabad\\
Kandi, Sangareddy - 502285\\
INDIA. 
}
\author[N. Kumar]{Narasimha Kumar}
\email{narasimha@math.iith.ac.in}
\address{
Department of Mathematics \\
Indian Institute of Technology Hyderabad\\
Kandi, Sangareddy - 502285\\
INDIA. 
}
\keywords{Drinfeld modular forms, Arbitrary type, Congruences,  Linear relations}
\subjclass[2010]{Primary 11F33, 11F52; Secondary 11F30}
\date{\today}
\begin{document}
\begin{abstract}
In~\cite{Cho09}, Choi studied congruences of coefficients 
(modulo $T^q-T$) for Drinfeld modular forms of level $\Gamma_0(T)$,  trivial type
and  the linear relations between the initial coefficients of those.
In this article, we generalize these results for level $\Gamma_0(T)$, arbitrary type.
\end{abstract}

\maketitle

\section{Introduction}
\label{Introduction}
 The study of congruences of Fourier coefficients of modular forms 
 is an  interesting area of research in number theory. There were several important works in the literature on the congruences
 of Fourier coefficients (cf.~\cite{Rib84},~\cite{Hid85}), but we are particularly focusing on~\cite{CKO05}, where the authors studied
 the $p$-divisibility properties of Fourier coefficients of modular forms for $\SL_2(\Z)$.  As an application, they have retrieved the results of Hatada, Hida on non-ordinary primes. 
 Later, these results were generalized by El-Guindy 
 for cusp forms of level $2,3,5,7 $ and $13$ (cf.~\cite{Gui07}). 
 
The study of sign changes of Fourier coefficients of modular forms is also an active area of  research in modern number theory (cf.~\cite{Mur83},~\cite{KS06}). In~\cite{Sie69}, Siegel obtained an upper bound for the first sign change of Fourier coefficients of modular forms for $\SL_2(\Z)$ by studying the space of linear relations among the initial Fourier coefficients of modular forms for $\SL_2(\Z)$. A general result on the linear relations were
obtained by Choie, Kohnen, and Ono for $\SL_2(\Z)$ (cf.~\cite{CKO05}), by El-Guindy for cusp forms of level $2,3,5,7 $ and $13$ (cf.~\cite{Gui07}).

 
 It is of real interest  to answer  similar questions  for the coefficients in the $u$-series expansion 
 of Drinfeld modular forms. Such a study has been initiated
 by Choi. In~\cite{Cho08}, she studied ($T^q-T$)-divisibility properties of the coefficients  of Drinfeld modular forms of any weight, any type for $\GL_2(A)$, and determined all the linear relations between the initial coefficients  of Drinfeld modular forms of trivial type. 
 In~\cite{Kaz08}, Kazalicki also obtained similar results for $\GL_2(A)$.

 For the level $\Gamma_0(T)$, these results are only known for Drinfeld modular forms of any weight, trivial type.
 In~\cite{Cho09}, Choi studied $(T^q-T)$-divisibility properties of the coefficient of  Drinfeld modular forms 
 of any weight, trivial type for $\Gamma_0(T)$. In \textit{loc. cit.}, she determined all the linear relations between the initial coefficients in the $u$-series expansion of Drinfeld modular forms of trivial type, level $\Gamma_0(T)$.

In this article, we continue this study for $\Gamma_0(T)$ by generalizing the results of~\cite{Cho09} for 
Drinfeld modular forms of any weight, \textbf{arbitrary type},  level $\Gamma_0(T)$.
Throughout the article, we fix to use the following notations:
\begin{itemize}
\item $p$ is an odd prime number and $q=p^r$ for some $r \in \N$.
\item $k\in \N$ and $l\in \Z/(q-1)\Z$ such that $k\equiv 2l \pmod {q-1}$.
      Let $0 \leq l \leq q-2$ be a lift of $l \in \Z/(q-1)\Z$. By abuse of notation,
      we continue to write $l$ for the integer as well as its class. Then, we define 
      $r_{k, l} := \frac{k-2l}{q-1}$  and $r_{k, l,N} := r_{k,l} + N + 1$, where $N$ is a non-negative integer. Recall that $\dim M_{k,l}(\Gamma_0(T))=1+r_{k,l}$ (cf. \cite[Proposition 4.1]{DK}).
\end{itemize}
\subsection{An overview of the article }
The article is organized as follows. In \S \ref{Basic Theory} we recall the basic theory of Drinfeld modular forms. In \S \ref{Main_Section_1} we study congruences of coefficients and generalize \cite[Theorem 3.4]{Cho09} for Drinfeld modular forms of any weight, \textbf{arbitrary type},  level $\Gamma_0(T)$. 
Finally, in \S \ref{Sec_Linear Relations} we determine all the  linear relations between the initial coefficients in $u$-series expansion of Drinfeld modular forms of any weight,  \textbf{arbitrary type} for  $\Gamma_0(T)$ which generalizes \cite[Theorem 4.1]{Cho09}.

\section{Basic theory of Drinfeld modular forms}
\label{Basic Theory}

In this section, we shall recall some basic theory of Drinfeld modular forms
(see \cite{Gos80},\cite{Gos80a}, \\ \cite{Gek88},\cite{GR96} for more details).

Let $\F_q$ denote the finite field of order $q$. Then, we set $A=\F_q[T]$ 
and
$K=\F_q(T)$.
Let $K_\infty=\F_q((\frac{1}{T}))$ be the completion of $K$ 
with respect to the infinite place $\infty$ (corresponding to $\frac{1}{T}$-adic valuation) and denote by $C$ the completion of an algebraic closure of $K_{\infty}$.
Let $L=\tilde{\pi}A \subseteq C$ be the $A$-lattice of rank $1$,
corresponding to the rank $1$ Drinfeld module given by $\rho_T=TX+X^q,$ where $\tilde{\pi}\in K_\infty(\sqrt[q-1]{-T})$ is defined up to a $(q-1)$-th root of unity.
The Drinfeld upper half-plane $\Omega= C-K_\infty$
has a rigid analytic structure. 
The group $\GL_2(K_\infty)$ acts on $\Omega$ via fractional linear transformations.
  Any $x\in K_\infty^\times$ has the unique expression 
$x= \zeta_x\big(\frac{1}{T} \big)^{v_\infty(x)}u_x,$
where $\zeta_x\in \F_q^\times$, and $v_\infty(u_x-1)> 0$ ($v_\infty$ is the valuation at $\infty$).
For any $k\in \N,l\in \Z/(q-1)\Z, \gamma=\psmat{a}{b}{c}{d}\in \GL_2(K_{\infty})$, and $f:\Omega \longrightarrow C$,  we define 
$f|_{k,l} \gamma := \zeta_{\det\gamma}^{l}\big(\frac{\det \gamma}{\zeta_{\det(\gamma)}} \big)^{k/2}(cz+d)^{-k}f(\gamma z).$
For an ideal $\mfn \subseteq A$, let $\Gamma_0(\mfn)$  denote
the congruence subgroup $\Gamma_0(\mfn) :=\{\psmat{a}{b}{c}{d}\in \mathrm{GL}_2(A) : c\in \mfn \}.$
\begin{dfn}
A rigid holomorphic (resp., meromorphic) function $f:\Omega \longrightarrow C$ is said to be a holomorphic (resp., meromorphic) Drinfeld modular form for $\Gamma_0(\n)$
of weight $k$ and type $l$ if 
\begin{enumerate}
\item $f|_{k,l}\gamma= f$ , $\forall \gamma\in \Gamma_0(\mfn)$,
\item $f$ is holomorphic (resp., meromorphic) at the cusps of $\Gamma_0(\mfn)$.
\end{enumerate}
\end{dfn}
Every holomorphic (resp., meromorphic) Drinfeld modular form has an unique expansion at the cusp $\infty$ w.r.t the parameter $u(z) :=  \frac{1}{e_L(\tilde{\pi}z)},$ where $e_L(z):= z{\prod_{\substack{0 \ne \lambda \in L }}}(1-\frac{z}{\lambda})$. Let $M_{k,l}(\Gamma_0(\mfn))$ denote the $C$-vector space of holomorphic 
Drinfeld modular forms of weight $k$ and type $l$ for $\Gamma_0(\mfn)$. If $f\in M_{k,l}(\Gamma_0(\mfn))$ vanishes at the cusps of $\Gamma_0(\mfn)$, then we say that $f$ is a Drinfeld cusp form.
Any $f\in M_{k,l}(\Gamma_0(\mfn))$ has $u$-series expansion at $\infty$ of the form 
$\sum_{0 \leq \ i \equiv l \mod (q-1)}a_f(i)u^{i}$
(similarly, for meromorphic function $f$, the index $i$ starts from a negative integer).  

By definition, if $k\not \equiv 2l \pmod {q-1}$ then $M_{k,l}(\Gamma_0(\mfn))=\{0\}$. Hence, we always with the vector space $M_{k,l}(\Gamma_0(\n))$ where $k\in \N$ and $l\in \Z/(q-1)\Z$ such that $k\equiv 2l \pmod {q-1}$.
We now give some useful examples of Drinfeld modular forms.
\begin{exple}[\cite{Gos80}, \cite{Gek88}]
\label{Eisenstein Series}
Let $d\in \N$. For $z\in \Omega$, the function
\begin{equation*}
g_d(z) := (-1)^{d+1}\tilde{\pi}^{1-q^d}L_d \sum_{\substack{a,b\in \F_q[T] \\ (a,b)\ne (0,0)}} \frac{1}{(az+b)^{q^d-1}}
\end{equation*}
is a Drinfeld modular form of weight $q^d-1$ and type $0$ for $\mathrm{GL}_2(A)$,
where $\tilde{\pi}$ is the Carlitz period and $L_d:=(T^q-T)\ldots(T^{q^d}-T)$ is the least common multiple of all monics of degree $d$.
We refer $g_d$ as an Eisenstein series and it does not vanish at $\infty$.
\end{exple}

\begin{exple}[Poincar\'e series]
\label{Poincare Series}
In~\cite{Gek88}, Gekeler defined the Poincar\'e series as follows:
$
 h(z) = \sum_{\gamma\in H\char`\\ \GL_2(A)} \frac{\det \gamma . u(\gamma z)}{(cz+d)^{q+1}}, 
$
where $H=\big\{\psmat{*}{*}{0}{1}\in \GL_2(A)\big\}$ and $\gamma = \psmat{a}{b}{c}{d}\in \GL_2(A)$.
Then $h$ is a cusp form of weight $q+1$, type $1$ for $\mathrm{GL}_2(A)$.  
The $u$-series expansion of $h$ at $\infty$ is given by $ -u -u^{(q-1)^2+1}+\ldots$.
By the properties of $\Delta$-function in~\cite[Page 228]{Gek86a}, we deduce  that $h$ vanishes exactly once (resp., $q$-times) at $\infty$ (resp., at $0$) as a Drinfeld modular form of level $\Gamma_0(T)$.

\end{exple}



We end this section by introducing an important function $E$. In~\cite{Gek88}, Gekeler defined the function $E(z):= \frac{1}{\tilde{\pi}} \sum_{\substack{a\in \F_q[T] \\ a \ \mathrm{monic}}} ( \sum_{b\in \F_q[T]} \frac{a}{az+b} )$
 which is analogous to the Eisenstein series of weight $2$ over $\Q$. Though $E$ is not modular but we can use it to construct a Drinfeld modular form
\begin{equation}
\label{E_T}
E_T(z) := E(z)- TE(Tz) \in M_{2,1}(\Gamma_0(T)).
\end{equation} 
The $u$-series expansion of $E_T$ at $\infty$ is given by $u-Tu^q+\ldots$
(cf.~\cite[Proposition 4.3]{DK21} for a detailed discussion about this function).

\section{Congruences for coefficients of Drinfeld Modular forms} 
\label{Main_Section_1}
%
%

In this section, we generalize~\cite[Theorem 3.4]{Cho09} to Drinfeld modular forms $f$ of level $\Gamma_0(T)$, arbitrary type. 
 We start by introducing the modular forms $\Delta_T$ and $\Delta_W$. 
Recall that $0$ and $\infty$ are the only cusps of the Drinfeld modular curve $X_0(T):= \overline{\Gamma_0(T)\char`\\ \Omega}$ and the operator $W_T:=\psmat{0}{-1}{T}{0}$ permutes the cusps.
Consider the functions
$ \Delta_T(z) := \frac{g_1(Tz)-g_1(z)}{T^q-T}\ \mrm{and} \ 
\Delta_W(z) := \frac{T^qg_1(Tz)-Tg_1(z)}{T^q-T}=-T^{\frac{q+1}{2}}\Delta_T|_{q-1,0}W_T$. 
Note that,  $\Delta_T$, $\Delta_W\in M_{q-1,0}(\Gamma_0(T))$ and their $u$-expansions are given by 
$\Delta_T =u^{q-1}-u^{q(q-1)}+\ldots\in A[[u]]$ and
$\Delta_W = 1+ Tu^{q-1} - T^qu^{q(q-1)} + \ldots \in A[[u]]$.

\begin{prop}\cite[Proposition 4.3]{DK}
\label{IMP_Prop}
Let $\Delta_T, \Delta_W$ and $E_T$ be as defined before.
\begin{enumerate}
\item \label{P1}  \label{P2}
       The modular form $\Delta_T$(resp., $\Delta_W$) vanishes $q-1$ times at  $\infty$ (resp., at  $0$) and non-zero on $\Omega \cup \{0\}$ (resp.,  on $\Omega \cup \{\infty\}$). Hence,  
       $\Delta_T$ and $\Delta_W$ are algebraically independent.
 \item \label{P3}
        The set $S:= \{\Delta_W^{r_{k,l}}E_T^l, \Delta_W^{r_{k,l}-1}\Delta_TE_T^l, \ldots, \Delta_W\Delta_T^{r_{k,l}-1} E_T^l, \Delta_T^{r_{k,l}}E_T^l \}$ forms a basis for the $C$-vector space $M_{k,l}(\Gamma_0(T))$.
 \item  \label{P4}
        For any $k \in \N$, the mapping $\eta: M_{k-2l,0}(\Gamma_0(T)) \longrightarrow M_{k,l}(\Gamma_0(T))$ 
         defined by $f \longrightarrow fE_T^l $
         is an isomorphism. 
 \item \label{P5} \label{P6}
        We have $E_T^{q-1}= \Delta_W\Delta_T$.  In particular, the function $E_T\in M_{2,1}(\Gamma_0(T))$ vanishes exactly once at the cusps $0,\infty$ and non-vanishing elsewhere.
\item \label{P7} $h(z)= -\Delta_W(z)E_T(z)$.
\end{enumerate}
\end{prop}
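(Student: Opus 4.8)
The plan is to treat the recorded $u$-series expansions as the main input and drive everything with three structural facts: the dimension formula $\dim_C M_{k,l}(\Gamma_0(T)) = 1 + r_{k,l}$, the involution $W_T$ that interchanges the two cusps $0$ and $\infty$, and the valence formula on $\Gamma_0(T)$. For part (1) I would first read the orders at $\infty$ directly off the expansions: $\Delta_T = u^{q-1} - \cdots$ gives $\ord_\infty \Delta_T = q-1$, while $\Delta_W = 1 + Tu^{q-1} - \cdots$ gives $\ord_\infty \Delta_W = 0$. Since $\Delta_W$ is a nonzero scalar multiple of $\Delta_T|_{q-1,0}W_T$ and $W_T$ sends $z \mapsto -1/(Tz)$ (hence swaps $0$ and $\infty$), the local behaviour of $\Delta_W$ at $0$ equals that of $\Delta_T$ at $\infty$, and conversely; this yields $\ord_0 \Delta_W = q-1$ and $\ord_0 \Delta_T = 0$. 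Non-vanishing on $\Omega$ is the one genuinely geometric point: by the valence formula a nonzero weight-$(q-1)$ form on $\Gamma_0(T)$ has a fixed total zero count, and since $\Delta_T$ already exhausts it at $\infty$, no zeros remain on $\Omega \cup \{0\}$; applying $W_T$ transfers the conclusion to $\Delta_W$. For algebraic independence I would observe that $\Delta_T/\Delta_W$ is a weight-$0$ modular function with a zero of order $q-1$ at $\infty$, hence nonconstant and so transcendental over $C$; grouping a hypothetical relation $P(\Delta_T,\Delta_W)=0$ by total degree (each monomial $\Delta_T^i\Delta_W^j$ with $i+j=d$ has the same weight $d(q-1)$) and dividing the degree-$d$ piece by $\Delta_W^d$ turns it into a polynomial in $\Delta_T/\Delta_W$, forcing all coefficients to vanish.

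Parts (2) and (3) are then pure linear algebra. Each $\Delta_W^{r_{k,l}-i}\Delta_T^i E_T^l$ has weight $(q-1)r_{k,l} + 2l = k$ and type $l$, so lies in $M_{k,l}(\Gamma_0(T))$; using $\ord_\infty E_T = 1$ its $u$-order is $i(q-1)+l$, and these are pairwise distinct, so a lowest-order-term argument shows $S$ is linearly independent. As $|S| = 1 + r_{k,l} = \dim_C M_{k,l}(\Gamma_0(T))$, $S$ is a basis. For part (3), $\eta(f)=fE_T^l$ maps $M_{k-2l,0}(\Gamma_0(T))$ into $M_{k,l}(\Gamma_0(T))$ (weights and types add); it is injective because the $u$-expansion embeds forms into the integral domain $C[[u]]$ and $E_T \ne 0$; and since $r_{k-2l,0} = (k-2l)/(q-1) = r_{k,l}$, domain and codomain share the dimension $1+r_{k,l}$, so the injection is an isomorphism.

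I would establish part (5) before part (4), as the former feeds the latter. Both $h$ and $\Delta_W E_T$ lie in $M_{q+1,1}(\Gamma_0(T))$, which by part (2) is two-dimensional with basis $\{\Delta_W E_T, \Delta_T E_T\}$ of $u$-orders $1$ and $q$. Writing $h = a\Delta_W E_T + b\Delta_T E_T$ and comparing the coefficients of $u$ and $u^q$ --- using that the $u^q$-coefficient of $\Delta_W E_T$ cancels and that $h = -u - u^{(q-1)^2+1}+\cdots$ has no $u^q$-term (valid since $q \ne (q-1)^2+1$) --- forces $a=-1$, $b=0$, giving part (5). For part (4), both $E_T^{q-1}$ and $\Delta_W\Delta_T$ lie in $M_{2(q-1),0}(\Gamma_0(T))$, which has basis $\{\Delta_W^2, \Delta_W\Delta_T, \Delta_T^2\}$ of $u$-orders $0, q-1, 2(q-1)$. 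Matching the $u^0$- and $u^{q-1}$-coefficients (the latter equal to $1$) gives $E_T^{q-1} = \Delta_W\Delta_T + \gamma\Delta_T^2$. To kill $\gamma$ I pass to the cusp $0$: from part (5) together with $\ord_0 h = q$ (recorded for the Poincar\'e series) and $\ord_0 \Delta_W = q-1$ from part (1), I get $\ord_0 E_T = 1$, so $E_T^{q-1}$ and $\Delta_W\Delta_T$ both vanish at $0$; their difference $\gamma\Delta_T^2$ must then vanish at $0$, but $\Delta_T^2$ is nonzero there by part (1), so $\gamma = 0$. The ``in particular'' clause follows: $E_T^{q-1}=\Delta_W\Delta_T$ vanishes to order $q-1$ at each cusp and is nonzero on $\Omega$ by part (1), whence $E_T$ vanishes exactly once at $0$ and $\infty$ and nowhere else.

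The routine parts here are the coefficient comparisons and the dimension bookkeeping, which become mechanical once the $u$-expansions and the basis from part (2) are in hand. The main obstacle is the non-vanishing of $\Delta_T$ on $\Omega$ in part (1): it cannot be read off the $u$-expansion --- note that $g_1$ itself does vanish on $\Omega$ --- and rests on the valence formula for $\Gamma_0(T)$ with the correct normalization of the orders at the cusps and the weighting of elliptic points. Pinning down that normalization is the step I would treat most carefully; once it shows that the full zero budget of a weight-$(q-1)$ form is already used at $\infty$, the remaining assertions fall out by the integral-domain property of $u$-expansions and linear algebra in the explicit bases furnished by part (2).
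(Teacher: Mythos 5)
The paper does not prove this proposition at all: it is quoted verbatim from \cite[Proposition 4.3]{DK}, so there is no in-paper argument to measure your attempt against. Judged on its own terms, your reconstruction is correct and essentially self-contained given the inputs the paper does record (the $u$-expansions of $\Delta_T$, $\Delta_W$, $E_T$, $h$, the dimension formula $\dim M_{k,l}(\Gamma_0(T))=1+r_{k,l}$, and the vanishing orders of $h$ at the two cusps from Example 2.3). Your overall strategy --- read off cusp orders from the expansions and transport them via $W_T$, kill zeros on $\Omega$ with the valence formula, then do everything else by linear algebra in the explicit basis with distinct leading $u$-orders --- is the natural one, and the order in which you prove things is sound: establishing $h=-\Delta_W E_T$ first and using it together with $\mathrm{ord}_0 h=q$ to get $\mathrm{ord}_0 E_T=1$, and then killing the coefficient $\gamma$ of $\Delta_T^2$ by evaluation at the cusp $0$ rather than by computing the $u^{2(q-1)}$-coefficient, is a clean way to avoid needing deeper terms of the expansions; there is no circularity since the cusp behaviour of $h$ comes from Gekeler's $\Delta$-function, independently of the proposition. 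The one genuinely external input is the valence formula for $\Gamma_0(T)$ with the correct cusp normalization (each cusp's $u$-order weighted by $1/(q-1)$, total budget $k/(q-1)$), which you correctly isolate as the crux of the non-vanishing of $\Delta_T$ on $\Omega\cup\{0\}$; as stated your argument checks out ($\Delta_T$ of weight $q-1$ has budget $1$, exhausted by its order-$(q-1)$ zero at $\infty$), and it is consistent with $h$ (weight $q+1$, budget $(q+1)/(q-1)=1/(q-1)+q/(q-1)$). Minor loose end worth writing out if this were to be made rigorous: the reduction of the algebraic-independence claim to the homogeneous case requires the standard fact that modular forms of distinct weights are linearly independent.
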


In~\cite[Theorem 3.4]{Cho09}, Choi multiplied $f$ with $E$ to get the congruence for the coefficients of $fE$ modulo $T^{q}-T$, which imply  the congruences for the coefficients of $f$ (cf. Corollary~\cite[Corollary 3.5]{Cho09}). Although the methodology of the proof of the main result is similar, the novelty in our work is to multiply $f$ with $E_T^{q-l}$ to get the congruence for the coefficients of $fE_T^{q-l}$ modulo $T^{q^d}-T$ and deduce the congruences for the coefficients of $f$.

Before we state our main result of this section, let us recall some important results which are useful in the proof. First, we shall recall the following Theorem
(cf.~\cite[Theorem 7.14.2]{Har77}).
 
\begin{thm}[Residue Theorem] 
\label{Residue Theorem}
Let $\n$ be an ideal of $A$.
For any $1$-form $\omega$ on $X_0(\n):= \overline{\Gamma_0(\n)\char`\\ \Omega}$, we have $\sum_{p\in X_0(\n)}\mathrm{Res}_p\omega=0.$
\end{thm}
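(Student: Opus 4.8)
The curve $X:=X_0(\n)$ is smooth, projective, and geometrically connected over the algebraically closed field $C$, and $\omega$ is a nonzero rational $1$-form, hence has only finitely many poles; choose an effective divisor $D=\sum_i n_i\,p_i$ large enough that $\omega\in H^0(X,\Omega^1_X(D))$, where $\Omega^1_X$ denotes the canonical (cotangent) sheaf. The plan is to interpret $\sum_{p}\mathrm{Res}_p\omega$ as the image of a single cohomology class under the Serre-duality trace isomorphism, and then to show that this class vanishes precisely because $\omega$ is a \emph{global} section rather than a purely local datum.

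Concretely, I would start from the short exact sequence of sheaves
$$0\longrightarrow \Omega^1_X \longrightarrow \Omega^1_X(D) \longrightarrow \mathcal{Q}\longrightarrow 0,$$
where $\mathcal{Q}:=\Omega^1_X(D)/\Omega^1_X$ is a skyscraper sheaf supported on the poles of $\omega$. Passing to cohomology produces a connecting homomorphism $\delta\colon H^0(X,\mathcal{Q})\to H^1(X,\Omega^1_X)$. The form $\omega$ maps to a section $\bar\omega\in H^0(X,\mathcal{Q})$ recording its principal parts at the $p_i$, and because $\bar\omega$ is the image of the \emph{global} section $\omega\in H^0(X,\Omega^1_X(D))$, it lies in $\ker\delta$, so $\delta(\bar\omega)=0$. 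Composing with the trace isomorphism $t\colon H^1(X,\Omega^1_X)\xrightarrow{\ \sim\ } C$ supplied by Serre duality, the decisive local-to-global compatibility is that $t\circ\delta$ applied to a principal part at $p$ returns exactly $\mathrm{Res}_p\omega$; summing over the support of $\mathcal{Q}$ gives $t(\delta(\bar\omega))=\sum_{p}\mathrm{Res}_p\omega$. Therefore $\sum_{p}\mathrm{Res}_p\omega=t(0)=0$.

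The step I expect to carry the real weight is the compatibility just invoked, which bundles together two facts that are genuinely delicate in characteristic $p$: first, that $\mathrm{Res}_p\omega$ (the coefficient of $dt/t$ in the Laurent expansion in a local uniformizer $t$) is independent of the choice of $t$, and second, that these local residues are exactly what the Serre-duality trace computes. Both are standard but not formal, since in positive characteristic the manipulations used in the classical complex-analytic argument break down. I would therefore either invoke the residue calculus packaged in Hartshorne's duality chapter directly, or, to stay self-contained, adopt Tate's operator-theoretic definition of residues, which establishes well-definedness, additivity, and the vanishing $\mathrm{Res}(df)=0$ uniformly in all characteristics. As an alternative route bypassing Serre duality, one may fix a finite morphism $\pi\colon X\to\mathbb{P}^1_C$, prove the trace–residue identity $\mathrm{Res}_Q(\pi_*\omega)=\sum_{p\in\pi^{-1}(Q)}\mathrm{Res}_p\omega$ for each $Q\in\mathbb{P}^1_C$, and reduce to the residue theorem on $\mathbb{P}^1_C$, which is an elementary partial-fraction computation; here the main obstacle simply migrates to the same local trace compatibility for the extension of complete local rings at the points above $Q$.
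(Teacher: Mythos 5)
The paper does not prove this statement at all: it is imported verbatim as \cite[Theorem 7.14.2]{Har77}, where Hartshorne in turn defers the proof to Serre's \emph{Groupes alg\'ebriques et corps de classes}. So there is no ``paper proof'' to match; what you have written is an actual argument, and it is the standard one. Your Serre-duality route --- the exact sequence $0\to\Omega^1_X\to\Omega^1_X(D)\to\mathcal{Q}\to 0$, the observation that the principal-parts class of a \emph{global} meromorphic form dies under the connecting map $\delta$, and the identification of $t\circ\delta$ on a local principal part with the local residue --- is exactly the classical proof, and you correctly isolate the two genuinely delicate inputs in characteristic $p$ (coordinate-independence of the residue and the trace--residue compatibility), together with sensible ways to discharge them (Hartshorne's residue formalism, Tate's operator-theoretic residues, or pushing forward along a finite map to $\mathbb{P}^1_C$ and doing partial fractions). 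That is all fine and more than the paper supplies.

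One point you pass over silently and should make explicit: the theorem as stated concerns the rigid-analytic quotient $X_0(\n)=\overline{\Gamma_0(\n)\char`\\ \Omega}$ and a rigid-analytic meromorphic $1$-form on it, whereas your entire argument lives in the category of smooth projective algebraic curves over $C$ and rational differentials. You need to invoke the algebraicity of the Drinfeld modular curve (it is the analytification of a smooth projective curve over $C$; for $\n=(T)$, relevant to this paper, it has genus $0$) together with rigid GAGA to identify meromorphic $1$-forms on the analytic curve with rational $1$-forms on the algebraic model, after which your cohomological computation applies verbatim. With that bridge stated, the proof is complete.
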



\begin{prop}
\label{Residue at infinity}
Let $G$ be a meromorphic Drinfeld modular form of weight $2$, type $1$ for $\Gamma_0(T)$. 
\begin{enumerate}
\item  \label{residue P1}
 If $G$ is holomorphic at $P\in \Gamma_0(T)\char`\\ \Omega$, then $\mathrm{Res}_P\ G(z)dz=0$.
\item \label{residue P2}
If the $u$-series expansion of $G$ at $\infty$ is given by
      $G(z)=\sum_{i\geq -n} a_{G,\infty} (i(q-1)+1)u^{i(q-1)+1},$ then 
      $\mathrm{Res}_\infty G(z)dz=-\frac{a_{G,\infty} (1)}{\tilde{\pi}}$,
      where $\tilde{\pi}$ is the Carlitz period.
\item \label{residue P3}
If the $u_0$-series expansion of $G|_{2,1}\psmat{0}{-1}{1}{0}$ at $\infty$ is given by $\sum_{i\geq -n} a_{G,0}(i(q-1)+1)u_0^{i(q-1)+1}$ where $u_0:=u(z/T)$,
      then $\mathrm{Res}_0\ G(z)dz=-\frac{T}{\tilde{\pi}}a_{G,0}(1)$. 
      Moreover, if the order of vanishing of $G$ at $0$ is at least $2$, then $\mathrm{Res}_0\ G(z)dz=0$.
\end{enumerate}
\end{prop}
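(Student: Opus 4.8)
The plan is to exploit the fact that, for a weight-$2$ type-$1$ form, the differential $\omega := G(z)\,dz$ is $\Gamma_0(T)$-invariant and hence descends to a meromorphic $1$-form on $X_0(T)$, so that the Residue Theorem (Theorem~\ref{Residue Theorem}) and purely local residue computations apply. First I would record the transformation law: since $k=2,\,l=1$ the slash operator reads $G|_{2,1}\gamma=\det\gamma\,(cz+d)^{-2}G(\gamma z)$, so $G|_{2,1}\gamma=G$ is equivalent to $G(\gamma z)=(\det\gamma)^{-1}(cz+d)^{2}G(z)$ for every $\gamma=\psmat{a}{b}{c}{d}\in\Gamma_0(T)$. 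Combining this with $d(\gamma z)=\det\gamma\,(cz+d)^{-2}\,dz$ gives $G(\gamma z)\,d(\gamma z)=G(z)\,dz$, i.e.\ $\omega$ is $\Gamma_0(T)$-invariant. This single identity underlies all three parts.

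For part~(\ref{residue P1}), at a point $P\in\Gamma_0(T)\backslash\Omega$ where $G$ is holomorphic the descended form $\omega$ is a holomorphic $1$-form near $P$, and a holomorphic $1$-form has vanishing residue, so $\mathrm{Res}_P\,\omega=0$. For part~(\ref{residue P2}), I would change coordinates from $z$ to the uniformizer $u=1/e_L(\tilde{\pi}z)$. Since $e_L$ is $\F_q$-linear its derivative is $e_L'\equiv 1$, whence $du=-\tilde{\pi}u^{2}\,dz$, that is $dz=-(\tilde{\pi}u^{2})^{-1}\,du$. Substituting the $u$-expansion $G=\sum_{i\ge -n}a_{G,\infty}(i(q-1)+1)u^{i(q-1)+1}$ yields $\omega=-\tilde{\pi}^{-1}\sum_{i}a_{G,\infty}(i(q-1)+1)\,u^{i(q-1)-1}\,du$, and the residue at $\infty$ is the coefficient of $u^{-1}\,du$, which forces $i=0$ and gives $\mathrm{Res}_\infty\,\omega=-a_{G,\infty}(1)/\tilde{\pi}$.

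For part~(\ref{residue P3}), I would move the cusp $0$ to $\infty$ by $\sigma=\psmat{0}{-1}{1}{0}$, which sends $\infty\mapsto 0$. The same computation as in the invariance step, now with $\sigma$ in place of $\gamma$, shows that under $z=\sigma w=-1/w$ one has $\sigma^{*}\omega=(G|_{2,1}\sigma)(w)\,dw$; since residues are intrinsic, $\mathrm{Res}_0\,\omega=\mathrm{Res}_{\infty}(G|_{2,1}\sigma)(w)\,dw$. The crucial point is the correct uniformizer at $0$: conjugating the stabilizer of $0$ in $\Gamma_0(T)$ by $\sigma$ turns it into the translations by the lattice $TA$, so the cusp $0$ has width $T$ and its uniformizer is $u_0=u(z/T)$. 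Repeating the derivative computation with $u_0=1/e_L(\tilde{\pi}z/T)$ gives $du_0=-\tilde{\pi}T^{-1}u_0^{2}\,dz$, producing the extra factor of $T$ and hence $\mathrm{Res}_0\,\omega=-\tfrac{T}{\tilde{\pi}}a_{G,0}(1)$. The ``moreover'' statement is then immediate: if $G$ vanishes to order $\ge 2$ at $0$, the $u_0^{1}$-coefficient $a_{G,0}(1)$ vanishes, so the residue is $0$.

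I expect the main obstacle to be part~(\ref{residue P3}): pinning down the correct local parameter $u_0=u(z/T)$ at the cusp $0$ (equivalently, identifying the cusp width $T$ for $\Gamma_0(T)$ via the conjugation by $\sigma$) and verifying carefully that the residue is preserved under the analytic identification induced by $\sigma$, so that the scaling factor $T$ enters exactly once. By contrast, parts~(\ref{residue P1}) and~(\ref{residue P2}) are routine once the invariance of $\omega$ and the characteristic-$p$ identity $e_L'\equiv 1$ are in hand.
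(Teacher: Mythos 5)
Your proposal is correct and follows essentially the same route as the paper: part (1) via holomorphy of the descended form $G(z)\,dz$, part (2) by the substitution $dz=-\tilde{\pi}^{-1}u^{-2}du$, and part (3) by pulling back through $\psmat{0}{-1}{1}{0}$ and using the parameter $u_0=u(z/T)$ with $dz=-\tfrac{T}{\tilde{\pi}}u_0^{-2}du_0$. The only difference is that you spell out the $\Gamma_0(T)$-invariance of $\omega$ and the cusp-width justification for $u_0$, which the paper leaves implicit (citing \cite{GR96} for part (1)).
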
 

\begin{proof}
\begin{enumerate}
\item By~\cite[\S 2.10]{GR96}, if $G$ is holomorphic at $P\in \Gamma_0(T)\char`\\ \Omega$, 
      then $G(z)dz$ is also holomorphic at $P$. Hence $\mathrm{Res}_P\ G(z)dz=0$.
\item The parameter at $\infty$ is given by  $u=\frac{1}{e_L(\tilde{\pi}z)}$ which implies  
      $dz=-\frac{1}{\tilde{\pi}} u^{-2}du$. Thus $G(z)dz = - \frac{1}{\tilde{\pi}} \cdot \sum_{i\geq -n} a_{G,\infty} (i(q-1)+1)u^{i(q-1)-1}du$. The coefficient of $u^{-1}$ gives the required result.
\item The parameter at $0$  is given by $u_0(z): =\frac{1}{e_L(\tilde{\pi}z/T)}$ which implies that
      $dz=-\frac{T}{\tilde{\pi}} u_0^{-2}du_0$. Thus
      $\big(G(z)|_{2,1}\psmat{0}{-1}{1}{0} \big)dz =- \frac{T}{\tilde{\pi}} \cdot \sum_{i\geq -n} a_{G,0}(i(q-1)+1)u_0^{i(q-1)-1}du_0$.
      Since the matrix $\psmat{0}{-1}{1}{0}$ permutes the cusps $0$ and $\infty$, 
      by comparing the coefficient of $u_0^{-1}$ on both sides,
      we get  $\mathrm{Res}_0\ G(z)dz=-\frac{T}{\tilde{\pi}}a_{G,0}(1)$.
      If the order of vanishing of $G$ at $0$ is $\geq 2$, then $G(z)|_{2,1}\psmat{0}{-1}{1}{0} =\sum_{i\geq 1} a_{G,0}(i(q-1)+1)u_0^{i(q-1)+1}$, which implies $\mathrm{Res}_0\ G(z)dz=0$.
      \end{enumerate}
\end{proof}
Now, we are in a position to state and prove the main result of this section. 
\begin{thm}
\label{MT_1}
Let $f\in M_{k,l}(\Gamma_0(T))$ be a non-zero Drinfeld modular form
such that the $u$-series expansion at $\infty$ belongs to $A[[u]]$. 
Suppose the $u$-series expansion of $fE_T^{q-l}$  at $\infty$ is given by
$fE_T^{q-l}= \sum_{j\geq 0} a_{fE_T^{q-l}}(j(q-1)+1)u^{j(q-1)+1}$ where $E_T$ is as in~\eqref{E_T}.
Fix $d \in \N$. Let  $b \in \N$ such that $r_{k, l}+2+a\  \frac{q^d-1}{q-1}=p^b$ holds
for some integer $a \geq 0$.
Then the following congruence  
$$a_{fE_T^{q-l}}(p^b(q-1)+1)\equiv 0 \pmod {T^{q^d}-T}$$
holds. 
Moreover, if $a$ can chosen to be $0$, then $a_{fE_T^{q-l}}(p^b(q-1)+1)=0$.
\end{thm}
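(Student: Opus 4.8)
The plan is to exploit the Residue Theorem (Theorem~\ref{Residue Theorem}) applied to a cleverly chosen meromorphic $1$-form of weight $2$, type $1$, so that the vanishing of the total residue forces the desired congruence. The natural object to consider is a weight-$2$, type-$1$ meromorphic modular form built from $f E_T^{q-l}$ together with powers of $\Delta_T$ and $\Delta_W$ chosen to land in the correct weight. Since $f E_T^{q-l}$ has weight $k + 2(q-l)$ and type $l + (q-l) \equiv 0 \pmod{q-1}$, I would divide by a suitable monomial $\Delta_W^{\alpha}\Delta_T^{\beta}$ to reduce the weight to $2$ and multiply by $E_T$ to correct the type to $1$. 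Concretely, I would form $G := \dfrac{f E_T^{q-l}\, E_T}{\Delta_W^{\alpha}\Delta_T^{\beta}}$ and arrange $\alpha, \beta$ so that $G$ has weight $2$; the exponent condition $r_{k,l}+2+a\,\frac{q^d-1}{q-1}=p^b$ should be exactly what makes the bookkeeping of weights and orders of vanishing work out, with the power $p^b$ appearing because the Frobenius-type relation $(T^{q^d}-T)$ enters through reduction of coefficients.

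The key computational input will be Proposition~\ref{Residue at infinity}. First I would check that $G$ is holomorphic on $\Gamma_0(T)\backslash\Omega$ away from the cusps, so that by part~\eqref{residue P1} every interior residue vanishes; this uses Proposition~\ref{IMP_Prop}\eqref{P1}, namely that $\Delta_T$ (resp.\ $\Delta_W$) vanishes only at $\infty$ (resp.\ at $0$) and $E_T$ vanishes only at the two cusps. Then the total-residue identity reduces to $\mathrm{Res}_\infty G\,dz + \mathrm{Res}_0 G\,dz = 0$. By Proposition~\ref{Residue at infinity}\eqref{residue P2}, $\mathrm{Res}_\infty G\,dz$ is a scalar multiple of the coefficient $a_{G,\infty}(1)$, which after unwinding the $u$-expansions of $\Delta_T = u^{q-1}+\cdots$ and $\Delta_W = 1+\cdots$ should be proportional to $a_{fE_T^{q-l}}(p^b(q-1)+1)$ — this is where the specific index $p^b(q-1)+1$ is extracted. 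By part~\eqref{residue P3}, $\mathrm{Res}_0 G\,dz$ is a scalar multiple (with a factor of $T$) of the coefficient $a_{G,0}(1)$ coming from the $u_0$-expansion at the cusp $0$.

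The main obstacle, and the heart of the argument, will be showing that $\mathrm{Res}_0 G\,dz \equiv 0 \pmod{T^{q^d}-T}$, or that it vanishes outright in the $a=0$ case. This requires analyzing the behavior of $G$ at $0$ via the Atkin--Lehner involution $W_T$, using the relation $\Delta_W = -T^{\frac{q+1}{2}}\Delta_T|_{q-1,0}W_T$ and the corresponding transformations of $E_T$ and $h$ from Proposition~\ref{IMP_Prop}\eqref{P5} and \eqref{P7}. Since $f$ has $u$-coefficients in $A = \F_q[T]$, the coefficient $a_{G,0}(1)$ will be a polynomial expression in $T$, and the term $a\,\frac{q^d-1}{q-1}$ in the exponent formula is precisely what introduces a factor divisible by $T^{q^d}-T$ through the identity $L_d = (T^q-T)\cdots(T^{q^d}-T)$ governing the Eisenstein coefficients; the clean case $a=0$ should make the order of vanishing of $G$ at $0$ at least $2$, so that Proposition~\ref{Residue at infinity}\eqref{residue P3} gives $\mathrm{Res}_0 G\,dz = 0$ directly and hence $a_{fE_T^{q-l}}(p^b(q-1)+1)=0$. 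I would carry out the order-of-vanishing count at $0$ carefully, since getting the threshold ``$\geq 2$'' exactly right is what separates the genuine equality from the mere congruence.
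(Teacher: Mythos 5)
Your skeleton --- apply the Residue Theorem to a weight-$2$, type-$1$ meromorphic form, kill the interior residues by holomorphy, and read off the coefficient $a_{fE_T^{q-l}}(p^b(q-1)+1)$ from $\mathrm{Res}_\infty$ --- is the right one, and your observation that the case $a=0$ should follow from ``order of vanishing at $0$ at least $2$'' matches the paper. But before that there is a bookkeeping error: $fE_T^{q-l}$ already has type $l+(q-l)=q\equiv 1\pmod{q-1}$, not $0$, so no extra factor of $E_T$ is needed; with your extra $E_T$ the numerator has weight $k+2(q-l)+2$, and the equation $(q-1)(\alpha+\beta)=k-2l+2q=(q-1)(r_{k,l}+2)+2$ has no integer solution for $q>3$, so your $G$ cannot be normalized to weight $2$ at all. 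The correct weight-$2$, type-$1$ object (for $a=0$) is simply $fE_T^{q-l}/\Delta_T^{r_{k,l}+2}$, which in the paper arises as $-\frac{h}{\Delta_T^{r_{k,l}+1}}\cdot\frac{f}{E_T^{l}}$ via $h=-E_T^q/\Delta_T$.

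The more serious gap is the mechanism for the modulus $T^{q^d}-T$. Your proposed $G$ contains no Eisenstein series, yet you expect the congruence to emerge from the residue at $0$ through ``$L_d$ governing the Eisenstein coefficients.'' In the paper's argument the residue at $0$ is \emph{exactly} zero (the factor $h$ vanishes to order $q\geq 2$ there), so nothing is extracted from the cusp $0$; the congruence instead comes from inserting the weight-zero, type-zero factor $g_d^{a}/\Delta_T^{a(q^d-1)/(q-1)}$. This factor is doing two indispensable jobs: it is the only way to raise the exponent of $\Delta_T$ from $r_{k,l}+2$ to $p^b$ without changing the weight, and the congruence $g_d\equiv 1\pmod{T^{q^d}-T}$ converts the exact identity $a_{G(a)}(1)=0$ into the stated congruence for $a_{fE_T^{q-l}}(p^b(q-1)+1)$, the single-coefficient extraction being possible only because $\Delta_T^{p^b}=u^{p^b(q-1)}(1-u^{p^b(q-1)^2}+\cdots)$ is a Frobenius twist. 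Without this factor, the vanishing of the $u$-coefficient of $fE_T^{q-l}/\Delta_T^{r_{k,l}+2}$ gives only a $C$-linear relation among the first $r_{k,l}+3$ coefficients of $fE_T^{q-l}$ (since $\Delta_T^{r_{k,l}+2}$ has no such clean expansion when $r_{k,l}+2$ is not a $p$-power), not a congruence for the coefficient indexed by $p^b$. As written, your plan closes only when $a=0$, i.e.\ when $r_{k,l}+2=p^b$ already.
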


\begin{proof}

For any integer $a \geq 0$, consider the function $G(a):=\frac{g_d^a}{\Delta_T^{a.\frac{q^d-1}{q-1}}}.\frac{h}{\Delta_T^{r_{k, l}+1}}.\frac{f}{E_T^l}$. 
\begin{itemize}
 \item  Since $g_d$ is non-vanishing at $\infty$ and $\Delta_T$ vanishes only at $\infty$,
        the function $\frac{g_d^a}{\Delta_T^{a.\frac{q^d-1}{q-1}}}$ is holomorphic on $\Omega \cup \{0\}$ and the possible pole is  only at $\infty$. 
 \item  The function $h$ vanishes exactly once (resp., $q$-times) at $\infty$ (resp., at $0$) as 
        a Drinfeld modular form for $\Gamma_0(T)$. The function $\Delta_T$  
        vanishes only at $\infty$ and the order of vanishing at $\infty$ is $q-1$ (cf. Proposition~\ref{IMP_Prop}(\ref{P1})). This implies that the function $\frac{h}{\Delta_T^{r_{k, l}+1}}$ is holomorphic on $\Omega \cup \{0\}$ with order of vanishing at $0$ is $q$ and has a pole only at  $\infty$.
 \item  By Proposition \ref{IMP_Prop}(\ref{P4}) we get $\frac{f}{E_T^l}\in M_{k-2l,0}(\Gamma_0(T))$ is a holomorphic on $\Omega\cup \{0,\infty\}$. 
\end{itemize}
Using the above properties, we get  the function $G(a)$ is a meromorphic Drinfeld modular form of weight $2$ and type $1$ for $\Gamma_0(T)$. Moreover, the function $G(a)$ is holomorphic on $\Omega\cup \{0\}$ with order of vanishing at least $q$ at $0$ and has a pole only at $\infty$. In particular, 
by Proposition \ref{Residue at infinity}, we obtain 
\begin{itemize}
 \item $\mathrm{Res}_P\ G(a)(z)dz=0$ \ for $P\in \Gamma_0(T)\char`\\ \Omega$, $\mathrm{Res}_0\ G(a)(z)dz=0$,
 \item $\mathrm{Res}_\infty\ G(a)(z)dz=-\frac{a_{G(a)}(1)}{\tilde{\pi}}$,
  where $a_{G(a)}(1)$ is the coefficient of $u$ in the $u$-series expansion of $G$ at $\infty$.
\end{itemize}
By Theorem~\ref{Residue Theorem}, we get  $\mathrm{Res}_\infty G(a)(z)dz= 0$
and hence $a_{G(a)}(1)=0$. 
By Proposition~\ref{IMP_Prop}((\ref{P5})\&(\ref{P7})) we have $h= -\frac{E_T^q}{\Delta_T}$. Consequently, 
$G(a) = -\frac{g_d^aE_T^{q-l}f}{\Delta_T^{r_{k, l}+a.\frac{q^d-1}{q-1}+2}}.$
Since $g_d\equiv 1 \pmod {T^{q^d}-T}$, we get
 \begin{equation}
 \label{congruence mod product of degree d}
-G(a) = \frac{g_d^aE_T^{q-l}f}{\Delta_T^{r_{k, l}+a.\frac{q^d-1}{q-1}+2}} \equiv 
      \frac{E_T^{q-l}f}{\Delta_T^{r_{k, l}+a.\frac{q^d-1}{q-1}+2}}\pmod {T^{q^d}-T}.
\end{equation}
Now, we  begin the proof of the theorem.   By hypothesis, let $b\in \N$  such that 
$r_{k, l}+a.\frac{q^d-1}{q-1}+2 =p^b $
holds for some $a \geq 0$. With this choice of $a$, ~\eqref{congruence mod product of degree d} becomes
\begin{equation}
 \label{derived congruence mod product of degree d}
-G(a) =\frac{g_d^aE_T^{q-l}f}{\Delta_T^{p^b}} \equiv 
      \frac{E_T^{q-l}f}{\Delta_T^{p^b}}\pmod {T^{q^d}-T}.
\end{equation}
Since $\frac{1}{\Delta_T^{p^b}}= u^{-p^b(q-1)}(1-u^{p^b(q-1)^2}+\ldots)$, an easy computation shows that the coefficient of $u$ in the $u$-series expansion of $\frac{E_T^{q-l}f}{\Delta_T^{p^b}}$ 
at  $\infty$ is $a_{fE_T^{q-l}}(p^b(q-1)+1)$.  By~\eqref{derived congruence mod product of degree d}, we obtain
$a_{fE_T^{q-l}}(p^b(q-1)+1) \equiv - \pmod {T^{q^d}-T}.$
Since $a_{G(a)}(1)=0$, we are done.
\end{proof}
\begin{remark}
When $d=1$, the existence of $a\geq 0$ with $r_{k,l}+2+a=p^b$ is automatic if $b \gg 0$.
Theorem~\ref{MT_1} is a generalization of~\cite[Theorem 3.4]{Cho09} 
from trivial type to arbitrary type.
\end{remark}

Now, we have a corollary of Theorem~\ref{MT_1}.
\begin{cor}
\label{Choi_Corollary}
Let $f\in M_{k,l}(\Gamma_0(T))$ be a Drinfeld modular form such that the $u$-series expansion at $\infty$ is given by
$f=\sum_{j\geq 0} a_f(j(q-1)+l)u^{j(q-1)+l}\in A[[u]]$. If $p^\alpha \mid l$ for some $\alpha \in \N$ and
 $m \leq \alpha$ be a natural number such that $p^m>r_{k, l}+1$, then the congruence
$$a_f((p^m-1)(q-1)+l)\equiv 0 \pmod {T^q-T} \ holds.$$
\end{cor}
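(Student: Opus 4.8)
The plan is to deduce this directly from Theorem~\ref{MT_1} applied with $d=1$, by extracting a single coefficient of $fE_T^{q-l}$ and showing that, under the divisibility hypothesis $p^\alpha\mid l$, this coefficient equals $a_f((p^m-1)(q-1)+l)$ exactly. So the whole corollary comes down to (i) a coefficient-matching computation and (ii) a characteristic-$p$ vanishing that makes the relevant convolution sum collapse to one term.

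First I would record the expansions. Since $E_T=u-Tu^q+\cdots$ has type $1$ and leading coefficient $1$, I may write $E_T^{q-l}=\sum_{i\ge 0}e_i\,u^{(q-l)+i(q-1)}$ with $e_0=1$. Multiplying by $f=\sum_{j\ge0}a_f(j(q-1)+l)u^{j(q-1)+l}$ and collecting the coefficient of $u^{n(q-1)+1}$ (using $l+(q-l)+(i+j)(q-1)=(i+j+1)(q-1)+1$) gives
\begin{equation*}
a_{fE_T^{q-l}}(n(q-1)+1)=\sum_{j=0}^{n-1}a_f(j(q-1)+l)\,e_{n-1-j}.
\end{equation*}

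The crucial step is a vanishing property of the $e_i$. Since $p^\alpha\mid l$ and $m\le\alpha$, we have $p^m\mid l$; and because $l\le q-2$ forces (for $l\neq 0$) $m\le\alpha<r$, so $p^m\mid q$, we conclude $p^m\mid(q-l)$. Writing $q-l=p^mM$ with $M\in\N$ and using that raising to the $p^m$-th power is additive (Frobenius) in $\F_q[T][[u]]$, I find $E_T^{q-l}=(E_T^{M})^{p^m}$, a power series in $u^{p^m}$. Hence every exponent $(q-l)+i(q-1)$ carrying a nonzero $e_i$ is divisible by $p^m$; as $q-l\equiv 0$ and $q-1\equiv -1\pmod{p^m}$, this forces $e_i=0$ unless $p^m\mid i$. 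Taking $n=p^m$ in the formula above, the only index $i=p^m-1-j\in\{0,\dots,p^m-1\}$ divisible by $p^m$ is $i=0$, i.e. $j=p^m-1$, so the sum collapses to
\begin{equation*}
a_{fE_T^{q-l}}(p^m(q-1)+1)=a_f((p^m-1)(q-1)+l).
\end{equation*}

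Finally I would invoke Theorem~\ref{MT_1} with $d=1$, $b=m$, and $a=p^m-r_{k,l}-2$: the hypothesis $p^m>r_{k,l}+1$ guarantees $a\ge0$, and $r_{k,l}+2+a=p^m=p^b$, so the theorem yields $a_{fE_T^{q-l}}(p^m(q-1)+1)\equiv 0\pmod{T^q-T}$. Combined with the displayed identity, this is exactly the asserted congruence. The only genuinely delicate point is the coefficient collapse, i.e.\ establishing the $p^m$-divisibility of the exponents of $E_T^{q-l}$ from $p^m\mid(q-l)$; everything else is bookkeeping. (In the boundary case $l=0$ one should separately note $m\le r$ so that $p^m\mid q$, this case recovering Choi's original trivial-type statement.)
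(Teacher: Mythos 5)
Your proof is correct and follows essentially the same route as the paper: both reduce the corollary to the identity $a_{fE_T^{q-l}}(p^m(q-1)+1)=a_f((p^m-1)(q-1)+l)$ via a Frobenius argument showing that $E_T^{q-l}$ is a power series in $u$ supported on a sparse set of exponents (the paper factors $E_T^{q-l}=(E_T^{p^\alpha})^{x}$ while you factor it as $(E_T^{M})^{p^m}$, which is only a cosmetic difference), and then both apply Theorem~\ref{MT_1} with $d=1$, $b=m$, using $p^m>r_{k,l}+1$ to guarantee $a=p^m-r_{k,l}-2\geq 0$. Your explicit justification that $p^m\mid(q-l)$ (and the remark about the case $l=0$) is a point the paper glosses over, but it is the same argument.
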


\begin{remark}
Corollary~\cite[Corollary 3.5]{Cho09} is a special case of Corollary~\ref{Choi_Corollary} for $l=0$ and $m=1$. In fact,  the assumption $q > p+1$ in Corollary $\it{loc. cit.}$ is redundant.
\end{remark}
\begin{proof}[Proof of Corollary~\ref{Choi_Corollary}]
Let $q-l=p^\alpha x$ for some $x\in \N$. Now
\begin{align*}
&\sum_{n\geq 0} a_{fE_T^{p^\alpha x}}( n(q-1)+1)u^{n(q-1)+1}=fE_T^{p^\alpha x}\\
&= \sum_{j\geq 0} a_f(j(q-1)+l)u^{j(q-1)+l}. \Big(\sum_{i\geq 0} (a_{E_T}(i(q-1)+1))^{p^\alpha}u^{p^\alpha i(q-1)+p^\alpha}\Big)^x.
\end{align*}
By comparing the coefficients of $u^{p^m(q-1)+1}$ on both sides, we get   
$$a_{fE_T^{q-l}}(p^m(q-1)+1)= a_f((p^m-1)(q-1)+l)a_{E_T}(1)^{p^\alpha x}=a_f((p^m-1)(q-1)+l),$$
since $a_{E_T}(1)=1$. The first equality follows from the fact that $p^\alpha (q-1)+p^\alpha>p^m(q-1)+1$.
Now, the result follows from Theorem~\ref{MT_1} by taking $d=1,b=m$.  
\end{proof}

Now, we shall give an example satisfying Corollary~\ref{Choi_Corollary}.  
\begin{exple}
Let $p=3, q=3^2, l=6$ and $k=12$, the assumptions of Corollary \ref{Choi_Corollary} are satisfied and the space $M_{12,6}(\Gamma_0(T))$ is generated by $E_T^6$. 
Hence, by Corollary~\ref{Choi_Corollary}, we get that 
$a_{E_T^6}((p-1)(q-1)+6) \equiv 0\ (T^q-T).$ In fact, a straight forward calculation shows that  $a_{E_T^{pm}}((p-1)(q-1)+pm)=0$ for any $m\in \N$ with $pm<q-1$. 
\end{exple}
We can also produce infinitely many examples satisfying Corollary~\ref{Choi_Corollary} as follows:
\begin{exple}
Suppose $p|l$. Consider the function $g_1^{p-2}E_T^l\in M_{(q-1)(p-2)+2l,l}(\Gamma_0(T))$.
Since $p> r_{(q-1)(p-2)+2l, l}+1$,  we get $a_{g_1^{p-2}E_T^l}((p-1)(q-1)+l)\equiv 0 \pmod {T^q-T}$
by Corollary~\ref{Choi_Corollary}.
\end{exple}

\section{linear relations between the initial Fourier coefficients}
\label{Sec_Linear Relations}
In this section, we determine all the  linear relations between the initial coefficients in $u$-series expansion of Drinfeld modular forms of any weight,  arbitrary type for  $\Gamma_0(T)$.
For any integer $i \geq 0$, consider the map
$$a_i^*: M_{k,l}(\Gamma_0(T)) \rightarrow C \  \ \mathrm{defined \ by} \ 
f \rightarrow a_f(i(q-1)+l),$$
where $f=\sum_{0\leq j\equiv l \pmod {q-1}}^\infty a_f(j)u^{j}.$
Then $a_i^* \in (M_{k,l}(\Gamma_0(T)))^*$, the dual of $M_{k,l}(\Gamma_0(T))$. The elements of $S$ in Proposition~\ref{IMP_Prop}(\ref{P3}) satisfy $a_i^*(\Delta_W^{r_{k,l}-j}\Delta_T^j E_T^l)=0$ for $i<j$, which implies 
\begin{prop}
\label{Basis for the dual}
The set $\{a_i^*\}_{i=0}^{r_{k,l}}$ forms a basis for the $C$-vector space
$(M_{k,l}(\Gamma_0(T)))^*$.
\end{prop}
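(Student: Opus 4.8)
The plan is to exhibit the dual space $(M_{k,l}(\Gamma_0(T)))^*$ explicitly, using the basis $S$ from Proposition~\ref{IMP_Prop}(\ref{P3}) as a lever. First I would record that $\dim_C M_{k,l}(\Gamma_0(T)) = 1 + r_{k,l}$, so its dual also has dimension $1 + r_{k,l}$, and the proposed set $\{a_i^*\}_{i=0}^{r_{k,l}}$ has exactly the right cardinality $1+r_{k,l}$. Thus it suffices to prove that these functionals are linearly independent; any linearly independent set of the correct size in a finite-dimensional space is automatically a basis.

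To establish linear independence, I would pair the functionals against the basis $S = \{\Delta_W^{r_{k,l}-j}\Delta_T^{j}E_T^{l}\}_{j=0}^{r_{k,l}}$ and compute the "evaluation matrix" $\big(a_i^*(\Delta_W^{r_{k,l}-j}\Delta_T^{j}E_T^{l})\big)_{0\le i,j\le r_{k,l}}$. The key input is the order-of-vanishing statement already flagged in the excerpt: since $\Delta_T = u^{q-1} - u^{q(q-1)} + \cdots$ vanishes to order $q-1$ at $\infty$, the factor $\Delta_T^{j}$ contributes a leading $u$-power of $u^{j(q-1)}$, while $\Delta_W = 1 + Tu^{q-1} + \cdots$ and $E_T^{l} = u^{l} + \cdots$ each begin at their lowest admissible power. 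Hence the $u$-expansion of $\Delta_W^{r_{k,l}-j}\Delta_T^{j}E_T^{l}$ starts at $u^{j(q-1)+l}$ with a nonzero (indeed, easily computed) leading coefficient. This immediately gives $a_i^*(\Delta_W^{r_{k,l}-j}\Delta_T^{j}E_T^{l}) = 0$ for $i<j$, which is precisely the vanishing already asserted in the excerpt, and a nonzero diagonal entry when $i = j$. Therefore the evaluation matrix is lower-triangular with nonzero diagonal, so it is invertible.

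From the invertibility of this matrix I would conclude linear independence of $\{a_i^*\}_{i=0}^{r_{k,l}}$: a relation $\sum_i c_i\, a_i^* = 0$ evaluated on each basis element of $S$ yields the homogeneous linear system whose coefficient matrix is the (invertible) evaluation matrix, forcing all $c_i = 0$. Combined with the dimension count, this shows the set is a basis of $(M_{k,l}(\Gamma_0(T)))^*$, completing the proof.

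The only genuine point requiring care — and the step I expect to be the main obstacle — is verifying rigorously that the leading coefficient of $\Delta_W^{r_{k,l}-j}\Delta_T^{j}E_T^{l}$ at $u^{j(q-1)+l}$ is nonzero in characteristic $p$, rather than merely tracking the order of vanishing. Because we work over $C$ of characteristic $p$, one must confirm that no leading term cancels: here the leading coefficients of $\Delta_W$, $\Delta_T$, and $E_T$ are $1$, $1$, and $1$ respectively (read off from the stated $u$-expansions), so the product has leading coefficient $1 \ne 0$, and no cancellation can occur. This makes the diagonal entries equal to $1$ and secures the triangularity argument.
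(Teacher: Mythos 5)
Your proposal is correct and matches the paper's argument: the paper likewise notes that $a_i^*(\Delta_W^{r_{k,l}-j}\Delta_T^j E_T^l)=0$ for $i<j$, so the evaluation matrix against the basis $S$ is triangular with nonzero (unit) diagonal, and the dimension count $\dim M_{k,l}(\Gamma_0(T))=1+r_{k,l}$ finishes the proof. Your extra care in checking that the leading coefficients of $\Delta_W$, $\Delta_T$, $E_T$ are all $1$ (so no cancellation occurs in characteristic $p$) is a worthwhile elaboration of a step the paper leaves implicit, but it is the same route.
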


For any integer $N\geq 0$, consider the surjective map  
$$\psi_{k,l;N} : C^{r_{k, l,N}+1} \longrightarrow (M_{k,l}(\Gamma_0(T)))^* \ \ \mathrm{defined \ by} \ 
(c_0,c_1,\ldots,c_{r_{k, l,N}}) \longrightarrow \sum_{i=0}^{r_{k, l,N}}c_i a_i^*.$$
\begin{dfn}
\label{definition of L_k,l;N(Gamma_0T)}
For $k \in \N$, $l\in \Z/(q-1)\Z$ such that $k\equiv 2l \pmod {q-1}$, we define
$L_{k,l;N}(\Gamma_0(T)):= \mrm{Ker}(\psi_{k,l;N})$. By Proposition~\ref{Basis for the dual}, $\dim_C L_{k,l;N}(\Gamma_0(T))= N+1$.
\end{dfn}

Now, we state the main result of this section, which is a generalization of~\cite[Theorem 4.1]{Cho09} from trivial type to arbitrary type.  For each Drinfeld modular form $g\in M_{N(q-1)+2l,l}(\Gamma_0(T))$, we define the elements $b(k,l,N,g;i)$, $c(k,l,N,g;i)\in C$ by the $u$-series expansion
\begin{equation}
\label{required function for making weight 2 type 1 form}
\frac{hg}{\Delta_T^{r_{k, l,N}}E_T^{2l}}=\sum_{i=0}^{r_{k, l,N}}b(k,l,N,g;i)u^{-i(q-1)+1-l}+ \sum_{i=1}^\infty c(k,l,N,g;i)u^{i(q-1)+1-l}.
\end{equation}

\begin{thm}
\label{MT_2}
The map $\phi_{k,l;N}: M_{N(q-1)+2l,l}(\Gamma_0(T)) \longrightarrow L_{k,l;N}(\Gamma_0(T))$ defined by
$\phi_{k,l;N}(g)  = (b(k,l,N,g;0),b(k,l,N,g;1),\ldots, b(k,l,N,g;r_{k, l,N}) )$
is an isomorphism of $C$-vector spaces. 
\end{thm}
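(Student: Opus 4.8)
The plan is to use that the source $M_{N(q-1)+2l,l}(\Gamma_0(T))$ and the target $L_{k,l;N}(\Gamma_0(T))$ both have dimension $N+1$ over $C$, and thereby reduce the isomorphism claim to well-definedness together with injectivity. First I would note $r_{N(q-1)+2l,l}=N$, so the dimension formula gives $\dim_C M_{N(q-1)+2l,l}(\Gamma_0(T))=N+1$, matching $\dim_C L_{k,l;N}(\Gamma_0(T))=N+1$ from Definition~\ref{definition of L_k,l;N(Gamma_0T)}. Linearity of $\phi_{k,l;N}$ is clear, since $g\mapsto \frac{hg}{\Delta_T^{r_{k,l,N}}E_T^{2l}}$ is $C$-linear and reading off the coefficients $b(k,l,N,g;i)$ from the $u$-expansion~\eqref{required function for making weight 2 type 1 form} is linear. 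Hence it suffices to show $\phi_{k,l;N}(g)\in L_{k,l;N}(\Gamma_0(T))$ for all $g$ and that $\phi_{k,l;N}$ is injective, because an injective $C$-linear map between spaces of equal finite dimension is an isomorphism.

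For well-definedness I would fix $g$ and an arbitrary $f\in M_{k,l}(\Gamma_0(T))$ and set $G:=f\cdot\frac{hg}{\Delta_T^{r_{k,l,N}}E_T^{2l}}$. A weight and type count, using $h\in M_{q+1,1}$, $\Delta_T\in M_{q-1,0}$, $E_T\in M_{2,1}$ and $r_{k,l,N}=r_{k,l}+N+1$, shows $G$ is a meromorphic Drinfeld modular form of weight $2$, type $1$ for $\Gamma_0(T)$; as $\Delta_T$ and $E_T$ are non-vanishing on $\Omega$ while $f,g,h$ are holomorphic there, $G$ is holomorphic on $\Omega$. I would then apply Theorem~\ref{Residue Theorem} to $G(z)\,dz$. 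Proposition~\ref{Residue at infinity}(\ref{residue P1}) kills the residues at interior points, and Proposition~\ref{Residue at infinity}(\ref{residue P3}) kills the residue at $0$ once $\ord_0 G\ge 2$. This last inequality is the heart of the matter: since $f$ and $g$ have type $l$ their $u_0$-expansions at $0$ involve only powers $\equiv l\pmod{q-1}$, so $\ord_0 f\ge l$ and $\ord_0 g\ge l$, and together with $\ord_0 h=q$, $\ord_0\Delta_T=0$, $\ord_0 E_T=1$ I get
$$\ord_0 G\ \ge\ l+q+l-2l\ =\ q\ \ge\ 2.$$
Consequently $\mathrm{Res}_\infty G(z)\,dz=0$, so by Proposition~\ref{Residue at infinity}(\ref{residue P2}) the coefficient of $u$ in $G$ is zero. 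Multiplying the $u$-expansion of $f$ by~\eqref{required function for making weight 2 type 1 form} shows only the principal part contributes to that coefficient, yielding $\sum_{i=0}^{r_{k,l,N}}b(k,l,N,g;i)\,a_f(i(q-1)+l)=0$. As $f$ was arbitrary, $\sum_{i=0}^{r_{k,l,N}}b(k,l,N,g;i)\,a_i^*=0$ in $(M_{k,l}(\Gamma_0(T)))^*$, i.e. $\phi_{k,l;N}(g)\in L_{k,l;N}(\Gamma_0(T))$.

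For injectivity I would turn $\phi_{k,l;N}(g)=0$ into an order-of-vanishing statement at $\infty$. Writing $F_g:=\frac{hg}{\Delta_T^{r_{k,l,N}}E_T^{2l}}$, the $b(k,l,N,g;i)$ are precisely the coefficients of the powers $u^{-i(q-1)+1-l}$ with $0\le i\le r_{k,l,N}$, all of which are $\le 1-l$, whereas every remaining power is $\ge q-l$; hence $\phi_{k,l;N}(g)=0$ is equivalent to $\ord_\infty F_g\ge q-l$. Using $\ord_\infty h=1$, $\ord_\infty\Delta_T=q-1$, $\ord_\infty E_T=1$, this amounts to $\ord_\infty g\ge (q-1)(r_{k,l,N}+1)+l$. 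On the other hand, the basis of $M_{N(q-1)+2l,l}(\Gamma_0(T))$ in Proposition~\ref{IMP_Prop}(\ref{P3}) shows any non-zero $g$ has $\ord_\infty g\le N(q-1)+l$, and $N(q-1)+l<(q-1)(r_{k,l,N}+1)+l$ since $r_{k,l}+2>0$. Thus $\phi_{k,l;N}(g)=0$ forces $g=0$, and injectivity (hence the isomorphism) follows.

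The step I expect to be the main obstacle is the vanishing of the residue at the cusp $0$. In the trivial-type case of Choi there is no $E_T^{2l}$ factor, but here that factor in the denominator threatens a genuine pole of $G$ at $0$ when $2l$ is large, which would introduce a spurious boundary term $-\tfrac{T}{\tilde\pi}\,a_{G,0}(1)$ and destroy the relation. The saving fact is exactly that type-$l$ forms vanish to order at least $l$ at $0$, so the order contributed by $f$ and $g$ offsets $E_T^{2l}$ and leaves $\ord_0 G\ge q$. Pinning down this cusp-$0$ behaviour rigorously—verifying that the type, and the orders of $h$, $\Delta_T$, $E_T$ at $0$, transform as claimed under the Atkin--Lehner involution $W_T$—is where I would spend the most effort.
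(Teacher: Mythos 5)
Your proposal is correct, and its overall architecture coincides with the paper's: you establish that $\phi_{k,l;N}$ lands in $L_{k,l;N}(\Gamma_0(T))$ by applying the Residue Theorem to $G=f\cdot\frac{hg}{\Delta_T^{r_{k,l,N}}E_T^{2l}}$ (the paper forms exactly the same $G$ and likewise reduces the cusp-$0$ residue to the vanishing order there, phrasing the order count via $\frac{f}{E_T^l},\frac{g}{E_T^l}\in M_{*,0}(\Gamma_0(T))$ from Proposition~\ref{IMP_Prop}(\ref{P4}) rather than via your ``type $l$ forces $\mathrm{ord}_0\ge l$'' observation --- these are the same fact), and you finish by injectivity plus the equality of dimensions. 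The one step where you genuinely diverge is injectivity. The paper multiplies $F_g=\frac{hg}{\Delta_T^{r_{k,l,N}}E_T^{2l}}$ by an arbitrary $f\in M_{k,l}(\Gamma_0(T))$ and observes that when all $b(k,l,N,g;i)$ vanish the product is a doubly cuspidal holomorphic form of weight $2$, type $1$, hence identically zero because $X_0(T)$ has genus $0$; this forces $g=0$. You instead translate $\phi_{k,l;N}(g)=0$ into the valuation bound $\mathrm{ord}_\infty g\ge (q-1)(r_{k,l,N}+1)+l$ and contradict it with the upper bound $\mathrm{ord}_\infty g\le N(q-1)+l$ read off from the basis in Proposition~\ref{IMP_Prop}(\ref{P3}). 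Your route is more elementary in that it avoids the genus-$0$/no-holomorphic-differentials input and never needs to choose a nonzero $f$ (a point the paper leaves implicit), at the cost of a small bookkeeping computation with $\mathrm{ord}_\infty h=1$, $\mathrm{ord}_\infty\Delta_T=q-1$, $\mathrm{ord}_\infty E_T=1$; the paper's route is shorter on the page but leans on the geometric fact about $X_0(T)$. Both arguments are sound.
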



\begin{proof}
First we  show that the image of $\phi_{k,l;N}$ belongs to $L_{k,l;N}(\Gamma_0(T))$. 
Let $g\in M_{N(q-1)+2l,l}(\Gamma_0(T))$.  Then, for any $f=\sum_{j=0}^\infty a_f(j(q-1)+l)u^{j(q-1)+l} \in M_{k,l}(\Gamma_0(T))$, consider the function
$G:= \frac{hg}{\Delta_T^{r_{k, l,N}}E_T^{2l}} \cdot f= 
\frac{h}{\Delta_T^{r_{k, l,N}}}\cdot \frac{g}{E_T^l} \cdot \frac{f}{E_T^l}$. 

\begin{itemize}
 \item The function $h$ vanishes exactly once (resp., $q$-times) at $\infty$ (resp., at $0$) and $\Delta_T$ vanishes only at $\infty$ and the order of vanishing is $q-1$
       (Proposition~\ref{IMP_Prop}(\ref{P1})). This implies that $\frac{h}{\Delta_T^{r_{k, l,N}}}$ is holomorphic on $\Omega \cup \{0\}$ with order of vanishing $q$ at $0$ and a pole only at $\infty$.
 \item By Proposition~\ref{IMP_Prop}(\ref{P4}), $\frac{g}{E_T^l}\in M_{N(q-1),0}(\Gamma_0(T))$ and $\frac{f}{E_T^l}\in M_{k-2l,0}(\Gamma_0(T))$.
\end{itemize}
These properties imply that the function $G$ is a meromorphic Drinfeld modular form of weight $2$ and type $1$ for $\Gamma_0(T)$. Moreover, the function $G$ is holomorphic on $\Omega\cup\{0\}$ with order of vanishing at least $q$ at $0$ and has a pole only at $\infty$. Note that, the coefficient of $u$ in the $u$-series expansion of $G$ is $\sum_{i=0}^{r_{k, l,N}}b(k,l,N,g;i)a_f(i(q-1)+l)$, which is equal to $0$
by Theorem~\ref{Residue Theorem} and Proposition~\ref{Residue at infinity}.
Thus the image of $\phi_{k,l;N}$ belongs to $L_{k,l;N}(\Gamma_0(T))$. 
Clearly, $\phi_{k,l;N}$  is linear.

Now, we show that $\phi_{k,l;N}$ is injective. Suppose that $\phi_{k,l;N}(g)=0$ for some $g\in M_{N(q-1)+2l,l}(\Gamma_0(T))$. For any $f\in M_{k,l}(\Gamma_0(T))$, by~\eqref{required function for making weight 2 type 1 form}, we get
\begin{equation}
\frac{hg}{\Delta_T^{r_{k, l,N}}E_T^{2l}}.f= \sum_{i=1}^\infty c(k,l,N,g;i)u^{i(q-1)+1-l}\cdot\sum_{j=0}^\infty a_f(j(q-1)+l)u^{j(q-1)+l}.
\end{equation}
Therefore $\frac{hgf}{\Delta_T^{r_{k, l,N}}E_T^{2l}}$ is a doubly cuspidal holomorphic Drinfeld modular form of weight $2$, type $1$ for $\Gamma_0(T)$. Since the modular curve $X_0(T)$ has genus $0$, the function 
$\frac{hgf}{\Delta_T^{r_{k, l,N}}E_T^{2l}}$ is identically zero.  This forces $g = 0$. 
Hence the map $\phi_{k,l;N}$ is injective. Since the dimensions of 
$L_{k,l;N}(\Gamma_0(T))$, $M_{N(q-1)+2l,l}(\Gamma_0(T))$ are equal,
the map $\phi_{k,l;N}$ is an isomorphism.
\end{proof}
The Theorem above and  Proposition~\ref{IMP_Prop}((\ref{P3}) and (\ref{P4})), we get
\begin{cor}
The $C$-vector spaces $L_{k,l;N}(\Gamma_0(T))$ and $L_{k-2l,0;N}(\Gamma_0(T))$ are isomorphic.
\end{cor}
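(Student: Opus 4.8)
The plan is to realize the desired isomorphism as a composition of three isomorphisms that are already available in the excerpt, rather than merely observing that both spaces have dimension $N+1$. First I would apply Theorem~\ref{MT_2} twice. Applied to the pair $(k,l)$ it gives an isomorphism $\phi_{k,l;N}\colon M_{N(q-1)+2l,l}(\Gamma_0(T)) \to L_{k,l;N}(\Gamma_0(T))$. To apply it to the pair $(k-2l,0)$ I must first record the admissibility condition $k-2l \equiv 2\cdot 0 \pmod{q-1}$, which is immediate from the standing hypothesis $k \equiv 2l \pmod{q-1}$, so that $L_{k-2l,0;N}(\Gamma_0(T))$ is defined and the theorem applies. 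Since the source weight in Theorem~\ref{MT_2} for type $0$ is $N(q-1)+2\cdot 0 = N(q-1)$, this second application yields an isomorphism $\phi_{k-2l,0;N}\colon M_{N(q-1),0}(\Gamma_0(T)) \to L_{k-2l,0;N}(\Gamma_0(T))$.

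The middle link is supplied by Proposition~\ref{IMP_Prop}(\ref{P4}): taking weight $N(q-1)+2l$ and type $l$, the map $\eta\colon M_{N(q-1),0}(\Gamma_0(T)) \to M_{N(q-1)+2l,l}(\Gamma_0(T))$, $f \mapsto fE_T^l$, is an isomorphism, since its source is $M_{(N(q-1)+2l)-2l,\,0} = M_{N(q-1),0}(\Gamma_0(T))$. Composing $\phi_{k-2l,0;N}^{-1}$, then $\eta$, then $\phi_{k,l;N}$, I obtain the chain
\[
L_{k-2l,0;N}(\Gamma_0(T)) \;\cong\; M_{N(q-1),0}(\Gamma_0(T)) \;\cong\; M_{N(q-1)+2l,l}(\Gamma_0(T)) \;\cong\; L_{k,l;N}(\Gamma_0(T)),
\]
which is exactly the required isomorphism of $C$-vector spaces.

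There is essentially no genuine obstacle here: all the content sits in Theorem~\ref{MT_2} and in the structure theory of Proposition~\ref{IMP_Prop}, and the corollary only has to splice them together. As a consistency check, each of the four spaces in the chain has dimension $N+1$ (for the $L$-spaces by their definition, whose dimension count ultimately rests on the basis in Proposition~\ref{IMP_Prop}(\ref{P3}), and for the middle spaces because $\dim M_{N(q-1)+2l,l}(\Gamma_0(T)) = 1 + r_{N(q-1)+2l,\,l} = 1+N$). The only point that requires any care is the index bookkeeping, namely confirming that the source of $\eta$ coincides exactly with the source of $\phi_{k-2l,0;N}$ and that the target of $\eta$ coincides with the source of $\phi_{k,l;N}$; once these weight and type indices are matched, the composition is well defined and the corollary follows.
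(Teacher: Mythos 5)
Your proposal is correct and matches the paper's own argument: the paper likewise obtains the corollary by combining Theorem~\ref{MT_2} (applied to both $(k,l)$ and $(k-2l,0)$) with the isomorphism $f\mapsto fE_T^l$ of Proposition~\ref{IMP_Prop}(\ref{P4}). Your version simply spells out the index bookkeeping that the paper leaves implicit.
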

We conclude the article with a remark that one can  extend \cite[Theorem 4.1]{Cho08} for $\GL_2(A)$ from trivial type to arbitrary type by replacing the function $\frac{g_1^{q-\alpha}}{h^{(q-1)(r+N-1)}}u$ in \cite[Page 99]{Cho08} by $\frac{g_1^{q-\beta}}{h^{(q-1)(\dim_C  M_{k,l}(\GL_2(A))+N)+2l-1}}f$,  where $f\in M_{N(q^2-1)+l(q+1),l}(\GL_2(A))$ and $\beta = \frac{k-(q+1)l}{q-1}+(1-\dim_C M_{k,l}(\GL_2(A)))(q+1)$.

\bibliographystyle{plain, abbrv}

\begin{thebibliography}{abcd999}


  

           

        
%
%
         
        
        
\bibitem[Cho08]{Cho08}
        Choi, SoYoung. 
        Linear relations and congruences for the coefficients of Drinfeld modular forms.
        Israel J. Math. 165 (2008), 93--101.
        
\bibitem[Cho09]{Cho09}
        Choi, SoYoung. 
        Congruences for coefficients of Drinfeld modular forms for $\Gamma_0(T)$. 
        Proc. Japan Acad. Ser. A Math. Sci. 85 (2009), no. 1, 1--5.
        
\bibitem[CKO05]{CKO05}
          Choie, Youngju; Kohnen, Winfried; Ono, Ken. 
          Linear relations between modular form coefficients and non-ordinary primes. 
          Bull. London Math. Soc. 37 (2005), no. 3, 335--341.
        


\bibitem[DK]{DK}
        Dalal, Tarun; Kumar, Narasimha.
        The  structure of Drinfeld modular forms of level $\Gamma_0(T)$ and applications.
       \url{ https://arxiv.org/abs/2112.04210  }    
        
\bibitem[DK21]{DK21}
        Dalal, Tarun; Kumar, Narasimha.
        On mod $\p$ congruences for Drinfeld modular forms of level $\p\m$.
        J. Number Theory 228 (2021), 253--275.

%
%
%
         
\bibitem[Gek86a]{Gek86a}
         Gekeler, Ernst-Ulrich. 
         Über Drinfeldsche Modulkurven vom Hecke-Typ. (German) [[On Drinfeld modular curves of Hecke type]] 
         Compositio Math. 57 (1986), no. 2, 219--236.

\bibitem[Gek88]{Gek88}
        Gekeler, Ernst-Ulrich. 
        On the coefficients of Drinfelʹd modular forms. 
        Invent. Math. 93 (1988), no. 3, 667--700. 


        

\bibitem[Gos80]{Gos80}
        Goss, David.
        $\pi$-adic Eisenstein series for function fields. 
        Compositio Math. 41 (1980), no. 1, 3--38. 

\bibitem[Gos80a]{Gos80a}
        Goss, David.
        Modular forms for ${\bf F}\sb{r}[T]$.
        J. Reine Angew. Math. 317 (1980), 16--39. 


        
\bibitem[GR96]{GR96}
        Gekeler, E.-U.; Reversat, M.
        Jacobians of Drinfeld modular curves.
        J. Reine Angew. Math. 476 (1996), 27--93. 
        
           
\bibitem[Gui07]{Gui07}
        El-Guindy, Ahmad. 
        Linear congruences and relations on spaces of cusp forms.
        Int. J. Number Theory 3 (2007), no. 4, 529--539.
        
 \bibitem[Har77]{Har77}
         Hartshorne, Robin. 
         Algebraic geometry. 
         Graduate Texts in Mathematics, No. 52. Springer-Verlag, New York-Heidelberg, 1977. 
         
\bibitem[Hid85]{Hid85}
        Hida, Haruzo. 
        Congruences of cusp forms and Hecke algebras. 
        S\'eminaire de th\'eorie des nombres, Paris 1983-84, 133--146, 
        Progr. Math., 59, Birkh\"auser Boston, Boston, MA, 1985.
         
\bibitem[Kaz08]{Kaz08}
        Kazalicki, Matija. 
        Linear relations for coefficients of Drinfeld modular forms. 
        Int. J. Number Theory 4 (2008), no. 2, 171--176.

\bibitem[KS06]{KS06}         
        Kohnen, Winfried; Sengupta, Jyoti. 
        On the first sign change of Hecke eigenvalues of newforms. 
        Math. Z. 254 (2006), no. 1, 173--184.

\bibitem[Mur83]{Mur83}
        Murty, M. Ram. 
        Oscillations of Fourier coefficients of modular forms. 
        Math. Ann. 262 (1983), no. 4, 431--446.
        



      

      
      
        
\bibitem[Rib84]{Rib84}
         Ribet, Kenneth A. 
         Congruence relations between modular forms. 
         Proceedings of the International Congress of Mathematicians, Vol. 1, 2 (Warsaw, 1983), 503--514, PWN, Warsaw, 1984.



        
      
\bibitem[Sie69]{Sie69}
        Siegel, Carl Ludwig.
        Berechnung von {Z}etafunktionen an ganzzahligen {S}tellen.
        Nachr. Akad. Wiss. G\"{o}ttingen Math.-Phys. Kl. II (1969), 87--102.     
        
        
          



%
\end{thebibliography}

\end{document}